\documentclass[12pt]{amsart}

\usepackage{graphicx}
\usepackage{amssymb}
\usepackage{amsmath, amscd}
\usepackage
{hyperref}

\newtheorem{thm}{Theorem}[section]

\newtheorem{theorem}[thm]{Theorem}
\newtheorem{proposition}[thm]{Proposition}

\newtheorem{lemma}[thm]{Lemma}

\theoremstyle{definition}

\theoremstyle{remark}
\newtheorem{remark}[thm]{Remark}




\newcommand{\R}{{\mathbb{R}}}
\newcommand{\C}{{\mathbb{C}}}

\newcommand{\bA}{{\mathbf{A}}}

\newcommand{\sk}{\operatorname{Sk}}

\newcommand{\bT}{\mathbf{T}}
\newcommand{\T}{\mathbb{T}}

\usepackage[margin=1in,marginparwidth=0.8in, marginparsep=0.1in]{geometry}

\usepackage{epigraph}

\usepackage{wrapfig, framed, caption}
\usepackage{ytableau}

\begin{document}

\thanks{TE is supported by the Knut and Alice Wallenberg Foundation and the Swedish Research Council. \\ \indent VS is partially supported by the NSF grant CAREER DMS-1654545.}

\title[Skein recursion for holomorphic curves and invariants of the unknot]{Skein recursion for holomorphic curves \\ and invariants of the unknot} 
\author{Tobias Ekholm}
\author{Vivek Shende}

\maketitle

\begin{abstract}
We determine the  skein-valued Gromov-Witten partition function 
for a single toric Lagrangian brane in $\C^3$ or the resolved conifold.  We 
first show geometrically they must satisfy a certain skein-theoretic recursion, 
and then solve this equation.  The recursion is a skein-valued quantization
of the equation of the mirror curve.  The solution is the expected hook-content formula.
\end{abstract}

\vspace{4mm}


%
%


\section{Introduction}

In \cite{SOB, bare} we give foundations for the enumeration of holomorphic curves with Lagrangian boundary in Calabi-Yau 3-folds.  
The basic idea is that the obstruction to invariance arising from codimension one boundaries in moduli can be exactly 
identified with the HOMFLYPT framed skein relations  (Figure \ref{skein})
on the boundaries of the holomorphic curves themselves.  Thus we 
retain invariance by counting curves with boundary on $L$ 
by the isotopy class of their boundary in the framed skein module $\sk(L)$, i.e.,
the free module generated by framed links in a three-manifold $L$, modulo the skein relations. 
That is, the invariant is an element of the framed skein module. 

That there should be some such marriage of holomorphic curve counting and knot theory was long predicted 
by the string theorists \cite{Witten, GV-geometry, OV}, who moreover made predictions of the 
resulting curve counts \cite{Aganagic-Vafa, AKMV}.  These predictions
have in some sense been mathematically confirmed: although a theory of open Gromov-Witten invariants was missing,
one can nevertheless formally compute by equivariant localization \cite{Liu, Katz-Liu, Graber-Zaslow, DSV}.  
The computations typically reduce to the Hodge integral formula of \cite{Faber-Pandharipande}. 

In our new setting we have a definition, but no longer have access to equivariant localization: to define skein valued invariants we must
perturb the holomorphic curve equation so that the boundaries of curves are embedded; necessarily 
breaking the $\C^*$ action as the fixed points of the latter are generally multiple covers.   

On the other hand we have a
new tool: equations in the skein coming from the study of 1-dimensional moduli spaces of holomorphic curves. 
Indeed, by the same arguments guaranteeing invariance of the skein valued Gromov-Witten invariant, 
the boundary of such a 1-dimensional moduli space must vanish in the skein.  Thus if we can compute this boundary
by some other means, we obtain an equation.  This idea was proposed and studied (nonrigorously) in \cite{AENV, Eicm, ENg}; 
our new technology \cite{SOB, bare} makes it rigorous and more general (the previous works can in retrospect
be understood as valued in a specialization of the skein). In the language of these papers the result of this paper is a skein valued recursion relation for the skein valued Gromov-Witten partition function, see Section \ref{relfrominfinity} for a discussion.  

When the Calabi-Yau $X$ and Lagrangian $L$ are noncompact with ideal boundary $(\partial X,\partial L)$ where $\partial L$ is a Legendrian in the contact manifold $\partial X$, 
we may study moduli of holomorphic curves with positive punctures asymptotic to Reeb chords of $L$.  
The simplest imaginable case is when there are only Reeb chords of Conley-Zehnder index $\ge 1$.  In this
case, the boundary of the moduli of curves with one positive puncture splits as a `product' of disks
in the symplectization of $(\partial X, \partial L)$, and curves without punctures 
in the interior of $(X, L)$, see Proposition \ref{prp:nodegzero}. Examples of this kind include the one we study here: 
$X = \C^3$ or the resolved conifold, and $L$ a toric brane. 
Toric branes have the topology of a solid torus, and there is one topological type of such for each `leg' of  the toric diagram \cite{Aganagic-Vafa}.

\begin{wrapfigure}{r}{\dimexpr 6cm + 2\FrameSep + 2\FrameRule\relax}
\begin{center}
\begin{framed}\raggedleft
\includegraphics[scale=0.2]{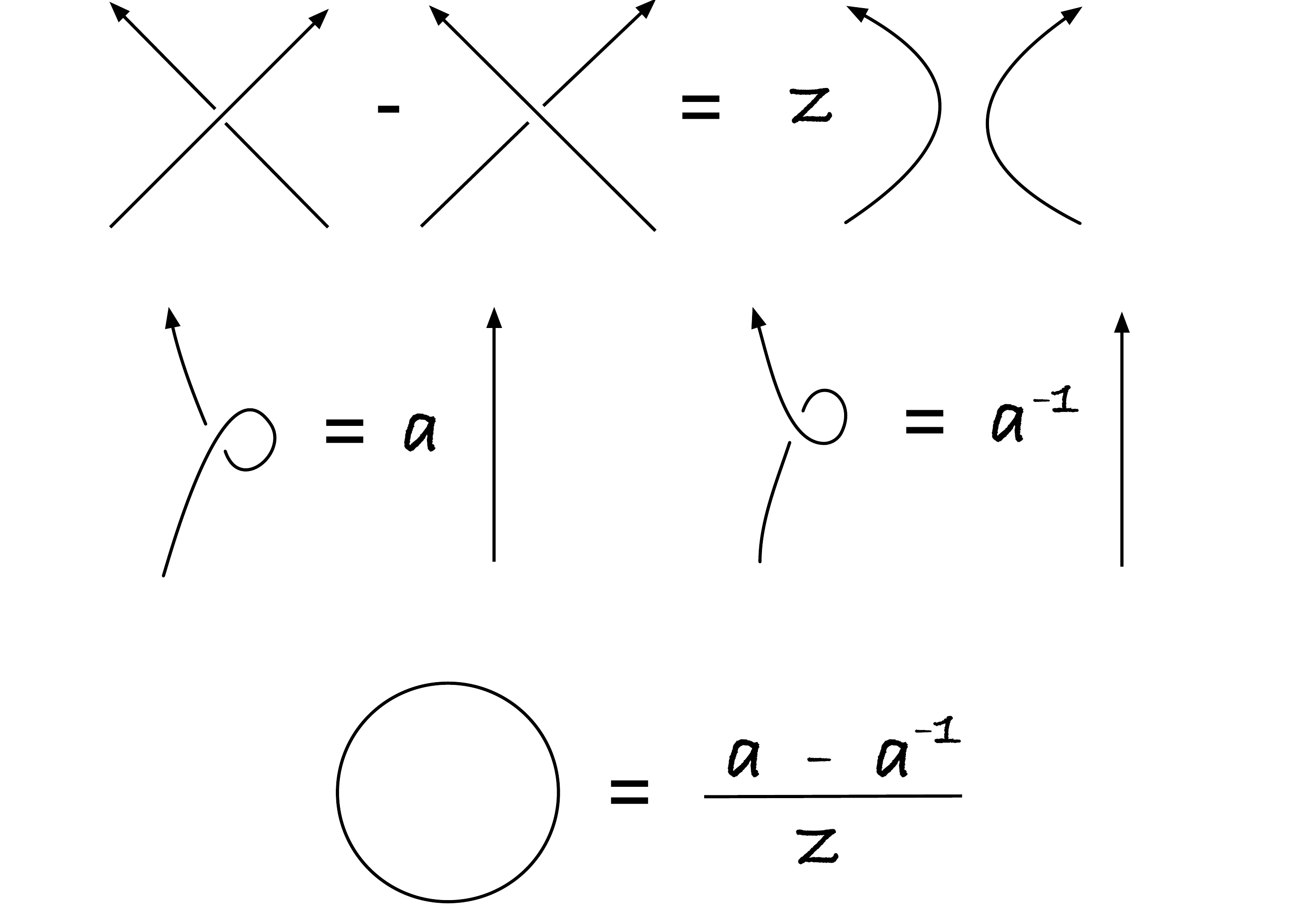}
\caption{\label{skein} The HOMFLYPT skein relations.  Here we will take $z = q^{1/2} - q^{-1/2}$. } 
\end{framed}
\end{center}
\end{wrapfigure}

Let us clarify what is meant by product.  Recall that the framed skein module $\sk(L)$ is the quotient of framed isotopy classes
of embedded links in $L$ by the skein relations \cite{Turaev, Przytycki}.  
Note that $\sk(\partial L) := \sk(\partial L \times [0,1])$ is an algebra, by concatenation
of intervals.  For a similar reason this algebra acts on $\sk(L)$.   The disks in the symplectization (after choosing 
capping paths) determine some element $\mathbf{A}_L \in \sk(\partial L)$.  The curves in the interior
determine an element $\Psi_L \in \sk(L)$.  Using the above action we can consider 
$\mathbf{A}_L \Psi_L \in \sk(L)$; the fact that it arises as a boundary gives us the equation $\mathbf{A}_L \Psi_L = 0$. 

Thus if we can determine $\mathbf{A}_L$, and solve equations in the skein, we can recover
$\Psi_L$.  Determining $\mathbf{A}_L$ is essentially the subject of (a generalization of) Legendrian contact homology \cite{EESPxR, EENS}; 
in the cases at hand, finding these curves is elementary.  

Skein modules of 3-manifolds are generally complicated, but here $L$ is a solid torus $\bT$ and its skein is 
very well studied.  
Denote by $\T$ the 
product of a torus and the interval; then $\sk(\T)$ acts on $\sk(\bT)$.  Fixing a choice of longitude on 
$\partial \bT$, we denote by $P_{1,0}$ the meridan, $P_{0,1}$ the longitude, and more generally $P_{a,b}$ the curve of 
slope $a/b$ for $a,b$ relatively prime.
The key point for us is that diagonalizing the action of $P_{0,1}$ on $\sk(\bT)$ yields the basis corresponding 
to irreducible quantum group representations and indexed by pairs of partitions.  In fact our invariants
can be seen geometrically to live in the `positive part' $\sk^+(\bT)$ in which one of these partitions
is empty.  We denote the corresponding basis of this positive part by $W_\lambda$.  

The only nontrivial facts we need from skein module theory are the following identities, which can
be found in e.g., \cite{Lukac, LM}.  We use the notation of \cite{Morton-Samuelson}. 
\begin{eqnarray}
\label{eigenvalue}
P_{1, 0} W_\lambda & = &  (\bigcirc + a (q^{1/2} - q^{-1/2}) c_\lambda(q))  \cdot W_\lambda \\
\label{branching}
P_{0,1} W_\lambda & = & \sum_{\lambda + \square = \mu} W_\mu
\end{eqnarray}
Here, $\bigcirc$ is an unknot (circle bounding a disk and framed by the normal to the disk), and $c_\lambda(q)$ is the content polynomial, whose definition we recall below in Section \ref{combinatorics}.

We now state our results. 

\begin{theorem}\label{t:AgVa}
Let $L$ be a toric brane in $\C^3$.  Then the skein-valued holomorphic curve count $\Psi \in \sk^+(L)$ 
satisfies the relation 
$$(\bigcirc -  P_{1,0} + a_L \gamma P_{0,1}) \Psi = 0$$
Here $\gamma$ is a signed monomial in the framing variable $a_L$, depending on framing choices.  
This equation has a unique solution of the form $1 + \cdots$, namely: 
$$\Psi = \sum_\lambda \gamma^{|\lambda|} W_\lambda \prod_{\square \in \lambda} \frac{  q^{-c(\square)/2}}{q^{h(\square)/2} - q^{-h(\square)/2}},$$
where $c$ denotes the content and $h$ the hook-length. 
This is the skein-valued count of curves in $\C^3$ ending on $L$. 
\end{theorem}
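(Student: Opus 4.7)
The plan is to split the argument into two stages: (i) derive the recursion $\mathbf{A}_L \Psi = 0$ from the geometry, and (ii) solve it explicitly using the eigenstructure of $\sk(\bT)$ described by~\eqref{eigenvalue}--\eqref{branching}.

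For stage (i), I would invoke Proposition~\ref{prp:nodegzero}: since the Reeb chords of $\partial L$ for a toric brane in $\C^3$ all have Conley--Zehnder index $\ge 1$, the boundary of the one-dimensional moduli of curves in $(\C^3, L)$ with one positive puncture factors as $\mathbf{A}_L \cdot \Psi$ and vanishes in $\sk(L)$. The remaining geometric task is the explicit identification of $\mathbf{A}_L \in \sk(\partial L)$. For a toric brane, short Reeb chords are classical and the rigid disks in the symplectization fall into three families: a constant contribution evaluating to the unknot scalar $\bigcirc$, a small disk whose boundary is the meridian $P_{1,0}$ (with the minus sign from orientation and capping-path conventions), and a disk with longitude boundary $P_{0,1}$ carrying the action weight $a_L \gamma$, where $\gamma$ is the monomial in $a_L$ recording framing and symplectic area. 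Summing gives $\mathbf{A}_L = \bigcirc - P_{1,0} + a_L\gamma P_{0,1}$.

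For stage (ii), write $\Psi = \sum_\lambda \psi_\lambda W_\lambda$ with $\psi_\emptyset = 1$. By~\eqref{eigenvalue} one has $(\bigcirc - P_{1,0})W_\lambda = -a_L(q^{1/2}-q^{-1/2})c_\lambda(q)W_\lambda$, and by~\eqref{branching}, $a_L\gamma P_{0,1}W_\lambda = a_L\gamma\sum_{\lambda+\square=\mu}W_\mu$. Reading the coefficient of $W_\mu$ in $\mathbf{A}_L\Psi=0$ yields the scalar recursion
\[
(q^{1/2}-q^{-1/2})\, c_\mu(q)\, \psi_\mu \;=\; \gamma \sum_{\mu=\lambda+\square}\psi_\lambda.
\]
Since $c_\mu(q)\ne 0$ for $\mu\ne\emptyset$, this expresses $\psi_\mu$ in terms of $\{\psi_\lambda : |\lambda|=|\mu|-1\}$, so the normalization $\psi_\emptyset = 1$ pins down a unique solution; this proves the uniqueness clause immediately.

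To obtain the closed form, I would verify by induction on $|\mu|$ that the proposed $\psi_\mu = \gamma^{|\mu|}\prod_{\square\in\mu} q^{-c(\square)/2}/(q^{h(\square)/2}-q^{-h(\square)/2})$ satisfies the recursion above. After cancelling the common $\gamma^{|\mu|-1}$, the identity to check is of content--hook type: the left side $(q^{1/2}-q^{-1/2})c_\mu(q)$ must equal the sum over addable boxes of the ratios $\psi_\lambda/\psi_\mu$, each of which collapses to $q^{c(\square_{\text{new}})/2}$ times a telescoping product of quantum integers along the hook of the new corner. This is a standard Pieri/principal-specialization identity for Schur functions and is essentially the one used to prove the classical hook-content formula. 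The principal obstacle is stage (i): producing an exhaustive list of rigid holomorphic disks at infinity for the toric brane and fixing the sign together with the precise framing exponent in $\gamma$ from the capping-path bookkeeping; once $\mathbf{A}_L$ is in hand, the rest is formal algebra in $\sk(\bT)$ using the two identities~\eqref{eigenvalue}--\eqref{branching} already recorded.
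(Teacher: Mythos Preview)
Your two-stage structure matches the paper exactly, and stage~(ii) is essentially identical: the paper derives the same scalar recursion $(q^{1/2}-q^{-1/2})\,c_\mu(q)\,\psi_\mu = \gamma\sum_{\lambda+\square=\mu}\psi_\lambda$ and verifies the closed form via the identity $c_\mu(q)/h_\mu(q) = \sum_{\lambda+\square=\mu} 1/h_\lambda(q)$, recorded as~\eqref{hookbranching} and attributed to the weighted hook length branching rule of Ciocan-Fontanine--Konvalinka--Pak. Your ``standard Pieri/principal-specialization identity'' is the same statement.

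Where you diverge from the paper is in stage~(i), precisely at what you flag as the principal obstacle. The paper does \emph{not} compute the signs and framing exponents in $\bA_L$ directly from orientation and capping-path bookkeeping. Instead it bootstraps: one first computes the degree-zero and degree-one pieces of $\Psi$ by hand (namely $\Psi = 1 + \gamma(q^{1/2}-q^{-1/2})^{-1}W_\square+\cdots$, coming from the single obvious embedded disk over the toric leg), writes $\bA_{AV}\sim\bigcirc+\gamma_{1,0}P_{1,0}+\gamma_{0,1}P_{0,1}$ with undetermined signed monomials $\gamma_{i,j}$, and then reads off $\gamma_{1,0}=-1$ and $\gamma_{0,1}=a_L\gamma$ from the vanishing of $\bA_{AV}\Psi$ in degrees $0$ and $1$. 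This sidesteps the direct sign/framing computation entirely, turning your hardest step into two lines. For the enumeration of the three rigid disks themselves, the paper projects $\T_{AV}$ along the Hopf map $S^5\to\C P^2$ onto the Clifford torus (a $3{:}1$ cover) and lifts the three well-known Maslov index~$2$ disks there; this is a concrete argument rather than an appeal to the chords being ``classical''.
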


The proof Theorem \ref{t:AgVa} consists in three parts: finding the geometric disks in the symplectization in Proposition \ref{disks AV}; determining the coefficients with which they appear in Proposition \ref{av coefficients}, and finally solving the skein equation in Proposition \ref{solving in C3}.  
The resulting $\Psi$ should be compared to \cite[Theorem 7.1]{Katz-Liu}. 

We treat similarly the case of the unknot conormal: 

\begin{theorem}\label{t:unknot}
Let $L$ be the unknot conormal in $T^*S^3$.  
Then the skein-valued holomorphic curve count $\Psi \in \sk^+(L) \otimes \sk(S^3)$ 
satisfies one of the following: 
$$ 
(\bigcirc - P_{1,0} + \gamma(a_L a P_{0,1} - a^{-1} P_{1,1})) \Psi = 0 \quad \implies \quad \Psi = \sum_\lambda \gamma^{|\lambda|} W_\lambda \prod_{\square \in \lambda} \frac{a q^{-c(\square)/2} - a^{-1} q^{c(\square)/2}}{q^{h(\square)/2} - q^{-h(\square)/2}}
$$

$$ 
(\bigcirc - P_{1,0} - \gamma(a_L a^{-1} P_{0,1} - a P_{1,1})) \Psi' = 0 \quad \implies \quad \Psi' = \sum_\lambda \gamma^{|\lambda|} W_\lambda \prod_{\square \in \lambda} \frac{a q^{c(\square)/2} - a^{-1} q^{-c(\square)/2}}{q^{h(\square)/2} - q^{-h(\square)/2}}
$$
Here $\gamma$ is a signed monomial in the framing variables $(a,a_L)$, depending on framing choices. 
These solutions are interchanged by reversing the orientation of $S^3$ (and correspondingly taking
$a \mapsto a^{-1}$ and $q^{1/2} \mapsto -q^{1/2}$), and give the skein valued curve
counts. 
\end{theorem}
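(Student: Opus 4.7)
The plan is to mirror the three-step strategy used to prove Theorem \ref{t:AgVa}: (a) enumerate the index-$1$ holomorphic disks at infinity and assemble them into $\mathbf{A}_L$; (b) compute their signs, framing factors, and $S^3$-linking weights; (c) diagonalize the resulting skein equation in the basis $W_\lambda$ and verify that the hook-content product solves the recursion.

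For step (a), the ideal contact boundary of $T^*S^3$ near the unknot conormal $L$ contains the Legendrian $2$-torus $\partial L$. The holomorphic disks in the symplectization with boundary on $\partial L$ and positive puncture of Conley--Zehnder index $1$ fall into three nontrivial topological classes: the small meridian disk in a cotangent fiber, contributing a $P_{1,0}$ term with no linking of the zero section; a longitude-type disk built from a spanning disk of the unknot $U \subset S^3$, contributing $P_{0,1}$ and linking the zero section positively (hence carrying an $a$-factor); and a $(1,1)$-type disk obtained by capping from the opposite side of the zero section, contributing $P_{1,1}$ with an $a^{-1}$-factor. The constant disk filling a Reeb chord produces the $\bigcirc$ term, and the transversality/index framework of \cite{SOB, bare}, together with action estimates, rules out further contributions; the enumeration follows the pattern of \cite{EENS}. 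For step (b), the signs and the framing monomial $\gamma$ are extracted from the coherent orientation conventions of \cite{SOB, bare} applied to these disks; the two orientations of the longitudinal circle on $\partial L$ yield the two forms of the equation, and they are exchanged by the orientation-reversing diffeomorphism of $S^3$, which on the HOMFLYPT skein acts as $a \mapsto a^{-1}$, $q^{1/2} \mapsto -q^{1/2}$, handling both assertions simultaneously.

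For step (c), set $\Psi = \sum_\lambda d_\lambda W_\lambda$ with $d_\emptyset = 1$. Equations \eqref{eigenvalue} and \eqref{branching} give the actions of $P_{1,0}$ and $P_{0,1}$, while $P_{1,1}$ acts as a weighted box-addition whose content-dependent eigenweight $g(\square)$ is obtained from the elliptic Hall algebra relations in $\sk(\T)$ (cf.\ \cite{Morton-Samuelson}). The vanishing of the $W_\mu$-coefficient in the skein equation then yields a recursion of the form
$$a z\, c_\mu(q)\, d_\mu \;=\; \sum_{\lambda + \square = \mu} \bigl( a_L a - a^{-1} g(\square) \bigr)\, d_\lambda,$$
and substituting the ansatz
$$d_\lambda \;=\; \gamma^{|\lambda|} \prod_{\square \in \lambda} \frac{a q^{-c(\square)/2} - a^{-1} q^{c(\square)/2}}{q^{h(\square)/2} - q^{-h(\square)/2}}$$
reduces it to a classical hook-content identity, verified by induction on $|\lambda|$ using the standard behavior of hook lengths under addition of a corner box.

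The main obstacle is step (a): correctly enumerating all index-$1$ disks at infinity and ruling out spurious contributions, which requires a careful analysis of the contact geometry of $S^*S^3$ near the unit conormal torus of $U$, including the index count arguments of Proposition \ref{prp:nodegzero}. Once $\mathbf{A}_L$ is in hand, the sign bookkeeping in (b) is technical but routine given \cite{SOB, bare}, and the algebra in (c) reduces to a well-known combinatorial identity.
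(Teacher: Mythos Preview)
Your three-step outline matches the paper's structure, but there are two genuine gaps in execution.

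In step (b) you assert that the signs and framing monomials are ``technical but routine'' from the orientation conventions of \cite{SOB, bare}. The paper explicitly does \emph{not} do this; in fact it remarks that in principle these could be computed directly, but instead chooses a different route. The paper's actual method (Proposition~\ref{unknot coefficients}) is to compute the first nontrivial term of $\Psi_\bigcirc$ geometrically --- it comes from a single holomorphic cylinder with one boundary the longitude on $L$ and the other the unknot in $S^3$, giving the coefficient $\tfrac{a-a^{-1}}{q^{1/2}-q^{-1/2}}$ --- and then to \emph{back-solve} for the unknown coefficients $\gamma_{i,j}$ of $\mathbf{A}_\bigcirc$ by imposing $\mathbf{A}_\bigcirc \Psi_\bigcirc = 0$ in degrees $0$ and $1$. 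This is what produces the two possible forms of the operator, and without this step (or a genuine direct sign computation, which you have not supplied) the coefficients of $P_{0,1}$ and $P_{1,1}$ are undetermined. Your description of the four disks in step (a) is also looser than the paper's: the $\bigcirc$ term is not a ``constant disk filling a Reeb chord'' but one of the four rigid disks found via the prequantization projection $ST^*S^3 \to \C P^1 \times \C P^1$ onto the Clifford torus (Proposition~\ref{disks unknot}), and the $a^{\pm 1}$ weights are not linking numbers with the zero section but are determined only a posteriori by the back-solving argument.

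In step (c) you invoke an unspecified eigenweight $g(\square)$ for the action of $P_{1,1}$ via the elliptic Hall relations. The paper instead uses the elementary skein identity $(q^{1/2}-q^{-1/2})P_{1,1} = [P_{1,0},P_{0,1}]$, which together with \eqref{eigenvalue} and \eqref{branching} immediately yields the action of $P_{1,1}$ on $W_\lambda$ with no further input. This gives the clean recursion $z\,c_\mu(q)\,d_\mu = \gamma \sum_{\lambda+\square=\mu}(a - a^{-1}q^{c(\square)})\,d_\lambda$ (your displayed recursion has a spurious $a$ on the left and a spurious $a_L$ on the right). The resulting identity is then exactly the hook branching rule \eqref{hookbranching} of \cite{CKP}, not a separate ``classical hook-content identity'' requiring its own inductive proof.
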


We determine the symplectization disks for Theorem \ref{t:unknot} in Proposition \ref{disks unknot} (known by another method already in \cite{EENS}), 
find their coefficients in Proposition \ref{unknot coefficients}, 
and then solve the skein equation in Proposition \ref{solving for unknot}. 

\begin{remark} 
The substitution $Q = a^2$ in Theorem \ref{t:unknot} gives the corresponding formula for curves in the resolved conifold ending on a toric brane on an external leg.  
\end{remark}

\begin{remark}
In \cite{SOB, bare}, we count curves of Euler characteristic $\chi$ by $z^{-\chi}$.  
In this article we set $z = q^{1/2} - q^{-1/2}$.  We do this because 
the eigenvalues of $P_{1,0}$ are naturally expressed in the variable $q$, but would 
be some complicated power series in the variable $z$.  In particular, the poles 
in the formula of Theorem \ref{t:AgVa} imply the existence of 
bare curves of arbitrarily low Euler characteristic of any given non-minimal area.
\end{remark}

\vspace{2mm}

\section{Holomorphic curves in the symplectization} \label{geometry} 
We determine the holomorphic curves with boundary on the Lagrangians under consideration in the $\R$-invariant regions of $\C^{3}$ and $T^{\ast}S^{3}$, respectively.  We describe two ways to find them: (1) viewing the contact manifold under consideration as a pre-quantization bundle and studying the Lagranigian projection of the torus and (2) drawing the front of the Legendrian in suitable coordinate systems on the contact manifold. In the first approach holomorphic curves at infinity are found from knowledge of holomorphic curves in the projection and in the second by the correspondence between holomorphic disks and Morse flow trees.

\begin{figure}[htp]
	\centering
	\includegraphics[width=.6\linewidth]{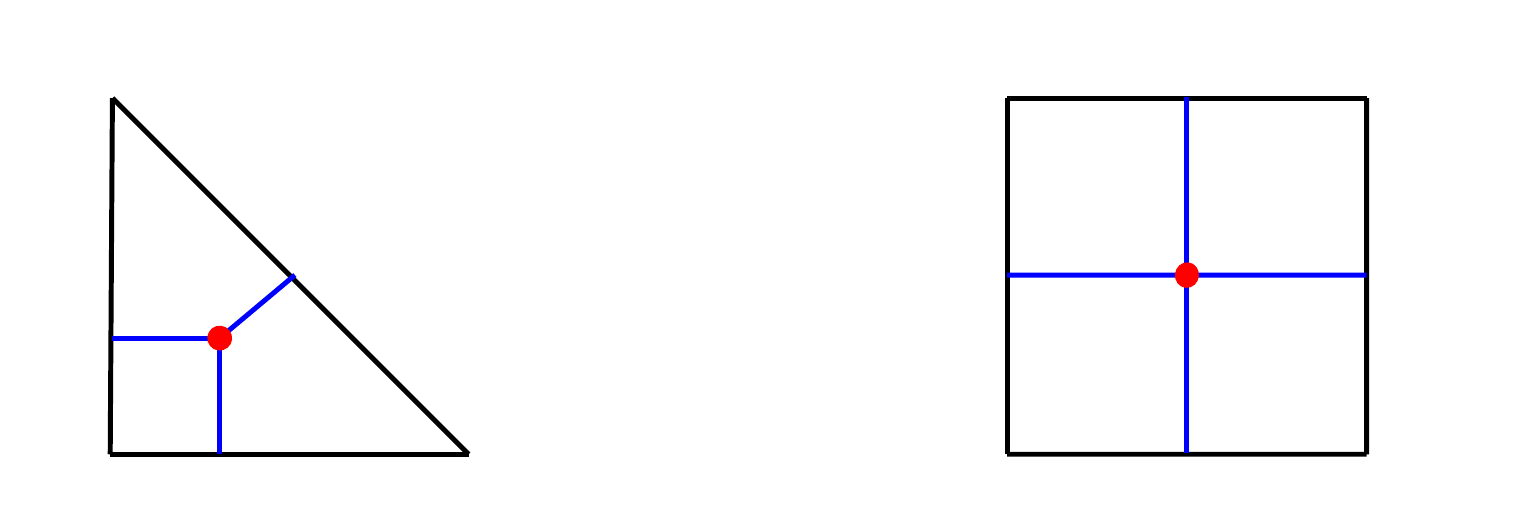}
	\caption{\label{fig:toricdiagrams} Holomorphic curves at infinity can be seen from toric geometry: the Legndrian torus at infinity of $\C^{3}$ and $T^{\ast}S^{3}$ projects 3-to-1 and 2-to-1 to the Clifford torus in $\C P^{2}$ and $\C P^{1}\times\C P^{1}$, respectively. The red dot indicates the Clifford torus and the blue lines holomorphic disks in the moment polytopes of $\C P^{2}$ (left) and $\C P^{1}\times\C P^{1}$ (right).}
\end{figure}

The toric Lagrangian in $\C^{3}$ is
parameterized by $(\alpha,\beta,r)\in S^{1}\times S^{1}\times [0,\infty)$ by the formula  
$ 
(\alpha,\beta,r)\mapsto \bigl(re^{i\alpha},(r+\delta)e^{i\beta}, re^{-i(\alpha+\beta+\frac{\pi}{2})}\bigr)$,
for some fixed $\delta > 0$
\cite{Aganagic-Vafa}.
The Lagrangian is  asymptotic to a Legendrian torus we denote $\T_{AV} \subset S^{5}$.  

\begin{proposition} \label{disks AV}
After generic perturbation, the Legendrian torus $\T_{AV}$ has a single Reeb chord of index one, and all other Reeb chords of
higher index.  There are three rigid curves at infinity with one positive puncture asymptotic to this chord; all are disks and 
after appropriate choice of capping paths, the boundaries of these three curves are (1) contractible (2) the longitude
and (3) the meridian of $\T_{AV}$. 
\end{proposition}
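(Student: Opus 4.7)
I will use the prequantization approach outlined at the start of Section~\ref{geometry}. The ideal contact boundary $S^{5}$ of $\C^{3}$ is a prequantization $U(1)$-bundle over $\C P^{2}$ (with curvature the Fubini--Study form). Choosing $\delta>0$ small and rescaling, the Lagrangian $L$ is asymptotic at infinity to the Legendrian lift $\T_{AV}$ of the Clifford torus $T_{\mathrm{Cl}}\subset\C P^{2}$; the covering $\T_{AV}\to T_{\mathrm{Cl}}$ is $3$-to-$1$ because the Maslov index of the Clifford disks equals $c_{1}(\C P^{2})=3$. Reeb chords of $\T_{AV}$ are in bijection with critical points of the "shift" function $f\colon T_{\mathrm{Cl}}\to \R/\tfrac{2\pi}{3}\Z$ whose graph is (roughly) the image of $\T_{AV}$ in the contactization of a neighborhood of $T_{\mathrm{Cl}}$. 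After a generic $C^{\infty}$-small perturbation, $f$ becomes Morse and the chord count is read off from its critical points.

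\textbf{Step 1: the index-one chord.} I will arrange that the perturbation of $f$ is a standard perfect Morse function on $T^{2}$: one minimum, two saddles, one maximum. By the standard formula for chord grading in the index-additive convention (a minimum yields a chord of Conley--Zehnder index $1$, saddles give index $2$, and the maximum gives index $3$), there is exactly one chord $c_{\min}$ of index $1$ and all remaining chords have index $\ge 2$. This establishes the first statement of the proposition.

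\textbf{Step 2: three rigid disks via the moment polytope.} The Clifford torus $T_{\mathrm{Cl}}\subset \C P^{2}$ bounds exactly three families of Maslov-index-$2$ holomorphic disks, one for each edge of the moment triangle of $\C P^{2}$ (the "blue lines" in Figure~\ref{fig:toricdiagrams}), each forming a $T^{2}$-family after quotienting by the disk's conformal reparameterizations. Each such disk lifts uniquely to a holomorphic disk in $S^{5}$ with boundary on $\T_{AV}$ and a single positive puncture, since the boundary monodromy in the Reeb direction is compensated by the asymptotic to a chord lying over a point of $T_{\mathrm{Cl}}$. Using the correspondence between holomorphic disks at infinity and Morse flow trees on $T_{\mathrm{Cl}}$ (the second viewpoint mentioned in Section~\ref{geometry}), the puncture must be attached at the unique minimum $c_{\min}$: the $T^{2}$-family of lifts intersected with the $T^{2}$-family of flow trees attached at the $0$-dimensional critical point $c_{\min}$ cuts down the moduli to an isolated rigid disk per facet. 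This gives precisely three rigid disks with positive puncture at $c_{\min}$.

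\textbf{Step 3: identifying the boundary classes.} The three boundary classes of the Clifford disks in $H_{1}(T_{\mathrm{Cl}})\cong \Z^{2}$ are the three primitive vectors normal to the edges of the $\C P^{2}$ moment triangle; they sum to zero. Pulling back under the $3$-to-$1$ cover $\T_{AV}\to T_{\mathrm{Cl}}$, these lift to three classes in $H_{1}(\T_{AV})$ whose sum is three times the fiber, i.e.\ three meridians. Choosing the capping path at $c_{\min}$ to be one of the three lifts of a facet vector, two of the remaining boundary classes become the longitude and a vanishing class, while the third combines with three meridian lifts to give the meridian. After this choice, the three boundaries are precisely the contractible class, the longitude, and the meridian, as claimed.

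\textbf{Main obstacle.} Steps 1 and 2 are standard once one accepts the prequantization dictionary and the flow-tree correspondence. The subtle point is Step 3: the identification of the three boundary classes is convention-sensitive, and verifying that a single choice of capping path produces simultaneously a contractible boundary, the longitude, and the meridian requires careful bookkeeping of the $3$-to-$1$ covering, the signs of the facet normals, and the framing choice defining the longitude on $\pa\bT$. I would carry this out by choosing an explicit affine basis for the cocharacter lattice of the torus adapted to one facet of the moment polytope and tracking the other two facet vectors through the cover.
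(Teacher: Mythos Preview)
Your approach is the same as the paper's: project via the Hopf map to the Clifford torus in $\C P^{2}$ and lift its three Maslov-$2$ disks. A few of the details are off, however.

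The Maslov index of each Clifford disk is $2$, not $3$, so your stated reason for the cover being $3$-to-$1$ is incorrect; the paper simply reads the degree off the explicit parameterization $(\alpha,\beta)\mapsto [e^{i(2\alpha+\beta)}:e^{i(2\beta+\alpha)}:i]$, whose induced map on $H_1$ has determinant $3$. Your ``shift function'' picture of the Reeb chords is also not quite right: the unperturbed $\T_{AV}$ has, for each $k\ge 1$, a Bott \emph{torus} of chords of length $k\cdot\tfrac{2\pi}{3}$, with the $k=1$ family of Bott index $1$; Morsifying that family is what produces chords of indices $1,2,2,3$, while the $k\ge 2$ families give only higher indices. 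Your conclusion is correct, but the mechanism is Bott-to-Morse, not a single-valued graph. Most importantly, Step~3 is muddled. The Clifford disk boundaries have nontrivial monodromy in the $3$-fold cover, so they lift to \emph{arcs} between the two endpoints of the Reeb chord, not to closed loops in $\T_{AV}$---this is precisely why the lifted disks are punctured. Hence they do not define classes in $H_{1}(\T_{AV})$ before capping, and the sentence about their sum being ``three times the fiber, i.e.\ three meridians'' has no clear meaning (the Hopf fiber is the Reeb direction, transverse to $\T_{AV}$, not a curve in it). What actually remains---and what you correctly flag as the obstacle---is to fix a capping path and check that the three concatenations give $0$, the longitude, and the meridian; the paper is equally terse here, asserting the boundaries are ``well-known and easy to find from the $S^{1}$-actions.''
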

\begin{proof}
Consider the Hopf map $S^5 \to \C P^2$.  The image of $\T_{AV}$ is 
parameterized in projective coordinates as
\[ 
(\alpha,\beta)\mapsto [e^{i(2\alpha+\beta)}:e^{i(2\beta+\alpha)}:i].
\]  
Thus the image is the Clifford torus in $\C P^2$, and the map is a three fold cover. The Legendrian lift $\Lambda$ has  Bott families of Reeb chords with Bott manifolds $\Lambda$ itself of length $k\cdot\frac{2\pi}{3}$. The Bott family for $k=1$ has index $1$ and others have higher indices.  
The rigid curves at infinity with positive puncture at the unique index 1 Reeb chord $c$ must be lifts of Maslov index 2 curves passing through a given point; the only such are three disks depicted in Figure \ref{fig:toricdiagrams}. The boundaries of these disks are well-known and easy to find from the $S^{1}$-actions. This then gives the three curves described above. 

It is also possible to find the disks via flow trees of Legendrian fronts. The Legendrian torus $\T_{AV}$ was studied in detail in \cite{GDR2} and the flow trees were determined in \cite{GDR1}. 
\end{proof} 

We turn to the unknot conormal, and write $\T_\bigcirc$ for the Legendrian conormal torus at infinity.  

\begin{proposition} \label{disks unknot}
After generic perturbation, the Legendrian torus $\T_{\bigcirc}$ has a single Reeb chord of index one, and all other Reeb chords of
higher index.  There are four rigid curves at infinity; all disks, each with a positive puncture at the index one Reeb chord.  
After appropriate choice of capping paths, the boundaries of these four curves are (1) contractible (2) the longitude
 (3) the meridian and (4) the slope (1,1) curve of $\T_{\bigcirc}$. 
\end{proposition}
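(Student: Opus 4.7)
The plan is to follow the strategy of Proposition \ref{disks AV}, replacing the Hopf fibration $S^{5}\to \C P^{2}$ by the analogous projection from the contact boundary of $T^{\ast}S^{3}$. The figure already indicates the target: the unit cotangent bundle of the round $S^{3}$ will play the role of $S^{5}$, and $\C P^{1}\times\C P^{1}$ the role of $\C P^{2}$.

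The first step is to exhibit the prequantization structure. Since $S^{3}$ with its round metric is Zoll, the Reeb (geodesic) flow on $ST^{\ast}S^{3}$ is periodic, and the quotient can be identified with the space of oriented great circles in $S^{3}$. Sending an oriented great circle to the oriented $2$-plane containing it gives an identification with the oriented Grassmannian of $2$-planes in $\R^{4}$, and the splitting of $\Lambda^{2}\R^{4}$ into self-dual and anti-self-dual parts identifies this Grassmannian with $\C P^{1}\times\C P^{1}$. I write $\pi\colon ST^{\ast}S^{3}\to \C P^{1}\times\C P^{1}$ for the resulting projection.

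Next, I analyze the image of $\T_{\bigcirc}$. A pair $(p,\xi)\in\T_{\bigcirc}$ determines, after metric duality, the oriented great circle in the $2$-plane spanned by $p$ and $\xi$. Taking the unknot to be $K=S^{3}\cap(\R^{2}\oplus 0)\subset\R^{4}=\R^{2}\oplus\R^{2}$, a direct calculation shows that $\pi$ maps $\T_{\bigcirc}$ onto the Clifford torus in $\C P^{1}\times\C P^{1}$ as a $2$-to-$1$ cover, the deck involution corresponding to the antipodal pair $\{p,-p\}=C\cap K$ for each perpendicular great circle $C$. Just as in Proposition \ref{disks AV}, $\T_{\bigcirc}$ decomposes into Bott families of Reeb chords whose lengths are integer multiples of the base Reeb period; the shortest family has index $1$ and generically perturbs to a single index one Reeb chord, while higher families have strictly higher index.

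Rigid holomorphic curves at infinity with a positive puncture at this chord project under $\pi$ to Maslov index $2$ holomorphic disks in $\C P^{1}\times\C P^{1}$ with boundary on the Clifford torus passing through a chosen point. By toric geometry these correspond to the four edges of the square moment polytope in Figure \ref{fig:toricdiagrams}, so there are exactly four, and each lifts uniquely to a rigid disk in the symplectization of $ST^{\ast}S^{3}$ with the prescribed positive puncture. A computation using the two $S^{1}$-actions combined with an appropriate choice of capping paths then identifies the four boundary classes in $H_{1}(\T_{\bigcirc})$ as (1) contractible, (2) the longitude, (3) the meridian, and (4) the slope $(1,1)$ curve. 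As in the previous proposition, this can be cross-checked via the flow tree description of a Legendrian front of $\T_{\bigcirc}$, in agreement with the disk count found in \cite{EENS}. The main obstacle lies in this last step: one must carefully track how the $2$-to-$1$ deck action pairs up the edge lifts with respect to the longitude/meridian framing of $\T_{\bigcirc}$, and fix capping paths consistently so that the four stated classes — in particular the unexpected $(1,1)$ slope class — emerge.
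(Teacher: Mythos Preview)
Your proposal is correct and follows essentially the same route as the paper's proof: project $ST^{\ast}S^{3}$ to $\C P^{1}\times\C P^{1}$ as a prequantization-type bundle, identify the image of $\T_{\bigcirc}$ as a $2$-to-$1$ cover of the Clifford torus, read off the four Maslov-$2$ disks from the moment polytope, and lift. The only cosmetic difference is that the paper factors the projection through the double cover $ST^{\ast}S^{3}\to ST^{\ast}\R P^{3}$ (so that the latter is the honest primitive prequantization bundle and the $2$-to-$1$ behaviour is absorbed into that first step), whereas you pass directly via the oriented Grassmannian; both arguments also cite the flow-tree description of \cite{EENS} as an alternative.
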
 
\begin{proof}
Consider $ST^{\ast}\R P^{3}$ as the pre-quantization bundle of local $\C P^{1}\times\C P^{1}$ and $ST^{\ast}S^{3}$ as its double cover. Here the Legendrian unknot conormal projects to the product (Clifford) torus in $\C P^{1}\times\C P^{1}$ (which itself lifts to the Legendrian conormal of the real projective line) and is a double cover of this projection. As for the toric brane above, holomorphic curves project to holomorphic curves with boundary on the product torus and there are four such disks, see Figure \ref{fig:toricdiagrams} (right).  The boundaries of these curves are evident from the moment map (and well known). 
We deduce the result by taking their lifts to $\T_{\bigcirc}$. 

Another proof: representing $ST^{\ast}S^{3}$ as $J^{1}S^{2}$, the front of the unknot conormal and corresponding flow trees (and disks) were described in \cite{EENS}. 
\end{proof} 

%

\section{Relations from infinity}\label{relfrominfinity}
We study one-dimensional moduli spaces of disconnected holomorphic curves with boundary on a Lagrangian $L$ in the symplectic manifold $X$ where $(X,L)$ has ideal contact boundary $(\partial X,\partial L)$. Our curves will have one positive puncture at an index one Reeb chord of $\partial L$ at infinity. By SFT-compactness \cite{BEHWZ}, this moduli space has (assuming transverality) two kinds of boundaries: (1) those arising from 
SFT-splittings into two level curves, an $\R$-invariant component in the symplectization joined at Reeb chords at its negative end to rigid curves with positive punctures in $(X,L)$ and (2) those
arising from boundary degenerations of holomorphic curves in $(X,L)$. Exactly as in \cite{SOB, bare} the second type of boundary can be canceled by counting curves by their boundaries in the skein module of the $L$. Hence the first type of boundary must itself vanish, when counted appropriately (with signs) in the skein module. When the Legendrian boundary $\partial L\subset \partial X$ has the property that all Reeb chords have non-negative grading (as is the case for knot conormals in $T^{\ast}S^{3}$) then the curves in the $\R$-invariant region gives equations in degree zero Reeb chords with coefficients in the skein. If the Reeb chords can be eliminated from this system of equations we obtain an element in the skein of $\partial L\times[0,1]$ that annihilates the element in $\sk(L)$ given by all rigid curves.  

In the cases at hand, there are no Reeb chords of degree zero, so no elimination is necessary.  In addition the
single one Reeb chord of degree one has the least action among all Reeb chords, so we need not argue for
transversality in order to invoke SFT compactness. 

\begin{proposition}\label{prp:nodegzero}
Let $(X,L)$ be the toric brane in $\C^{3}$ or the unknot conormal in $T^{\ast}S^{3}$ as above. Then after arbitrarily small perturbation, all Reeb chords of $\partial L$ have degrees $\ge 1$ and there is a  unique degree one Reeb chord $c$.
The moduli space  $\mathcal{M}(c)$ of holomorphic curves with positive puncture at $c$ is one dimensional, and 
any SFT-type boundary must correspond to a single curve in the $\R$-invariant region, plus curves without punctures in $(X,L)$.       
\end{proposition}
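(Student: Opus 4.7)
The plan is to combine the Reeb chord data from Propositions \ref{disks AV} and \ref{disks unknot} with standard SFT compactness, and then use action and index comparisons to rule out every boundary configuration except the one asserted. The first step is to upgrade those propositions by observing that the unique index-one chord $c$ is also action-minimal among all Reeb chords of $\partial L$. In both cases $\partial L$ sits inside a prequantization bundle whose Reeb chords form Bott families indexed by multiples $k$ of a base length (namely $2\pi/3$ for $\T_{AV}$ and $\pi$ for $\T_\bigcirc$), so the $k=1$ family is strictly shortest. A small Morse-type perturbation replaces each Bott family by finitely many nondegenerate chords of nearby action, with gradings determined by the Morse index of a function on the Bott manifold plus a fixed shift. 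This makes $c$ simultaneously the unique index-one chord and the unique action-minimizer among all Reeb chords.

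Next, by the standard index formula for holomorphic curves in a Calabi--Yau 3-fold with Lagrangian boundary and a single positive puncture at a Reeb chord of grading $|c|$, the formal dimension of $\mathcal{M}(c)$ equals $|c|=1$, so (after generic perturbation) $\mathcal{M}(c)$ is a smooth one-dimensional manifold.

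The main step, and the only place where something nontrivial has to be checked, is the SFT-compactness analysis. A sequence in $\mathcal{M}(c)$ running to the SFT end \cite{BEHWZ} limits, modulo the interior boundary degenerations already absorbed by the skein relation, to a broken curve whose top level lies in the symplectization $\partial X \times \R$ and carries the positive puncture at $c$, with lower levels in further copies of the symplectization or in $(X,L)$. Since action is conserved at each level transition and $c$ is action-minimal, the top level can carry no negative punctures: any such puncture would be at a Reeb chord of action strictly less than the action of $c$, of which there are none. Consequently only a single symplectization level appears, that top level is a rigid (modulo $\R$) punctured disk with positive puncture at $c$, and the remaining bottom-level piece consists entirely of curves without punctures in $(X,L)$, as claimed. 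The anticipated obstacle --- a delicate transversality analysis at multi-level Reeb chord cascades with negative ends --- is sidestepped entirely by this action argument, matching the remark preceding the statement that such transversality is not needed here.
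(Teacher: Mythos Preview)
Your proof is correct and follows essentially the same approach as the paper: both use that $c$ is the action-minimal Reeb chord to rule out any negative punctures on curves in $\mathcal{M}(c)$, and then invoke SFT-compactness to conclude. You have supplied more detail than the paper (the origin of action-minimality from the Bott family structure, the dimension count), but the core argument is identical.
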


\begin{proof}
Since $c$ has minimal action no curve in $\mathcal{M}(c)$ can have any negative puncture.  
The result then follows by SFT-compactness. 
\end{proof}

%

Let us write $\Psi \in \sk(\bT)$ for the count in the skein of the interior curves and $\bA \in \sk(\T)$ for the count in the skein
of the outside disks.  We have the equation $\bA \Psi = 0$, where the product means the action of 
$\sk(\T)$ on $\sk(\bT)$. 

We distinguish the two cases above by writing $\Psi_{AV}, \bA_{AV}$ for the toric brane in $\C^3$, 
and $\Psi_\bigcirc, \bA_\bigcirc$ for the case of the unknot.  

Above we have determined the holomorphic curves which contribute to $\bA_{AV}, \bA_{\bigcirc}$.  We should also
determine the coefficients of the corresponding terms.  The geometric multiplicities are in each case $\pm 1$, but 
we have not yet determined the sign. 
In the skein we must also remember the framing of the boundary and the related 4-chain intersection of the curves.  
In principle these could be computed directly;  instead we will determine them from the equation $\bA \Psi = 0$, using
the first (easy to compute) term of $\Psi$.  We write $a_L$ for the framing variable in the skein of the toric brane or of the conormal
of the unknot. 

\begin{proposition}  \label{av coefficients}
We have 
$$\Psi_{AV} = 1 + \frac{\gamma}{q^{1/2}-q^{-1/2}} W_\square + \cdots$$ 
where $\gamma$ is some signed power of $a_L$ depending on framing choices.  
For the same choices,
$$ \pm (a_L)^? \bA_{AV} = \bigcirc - P_{1,0} + a_L  \gamma P_{0,1} $$
\end{proposition}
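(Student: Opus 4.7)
The plan is to combine Proposition \ref{disks AV}, which restricts $\bA_{AV}$ to a linear combination of the three skein classes $\bigcirc$, $P_{1,0}$, $P_{0,1}$, with an explicit computation of the leading nontrivial term of $\Psi_{AV}$ and the constraint $\bA_{AV}\Psi_{AV}=0$ to pin down the coefficients.

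First I would compute the leading nontrivial term of $\Psi_{AV}$ geometrically. In $\C^3$, the toric brane $L$ admits a single basic holomorphic disk (the Aganagic--Vafa disk) whose boundary generates $H_1(L)=\Z$; in the basis $\{W_\lambda\}$ of $\sk^+(\bT)$, this boundary class corresponds to $W_\square$, since the core circle of the solid torus is precisely $P_{0,1}W_\emptyset = W_\square$ by the branching rule \eqref{branching}. The disk has Euler characteristic $1$, and so contributes with weight $z^{-1}$ in the convention where curves are counted by $z^{-\chi}$. Bundling sign, framing, and $4$-chain intersection data into a single signed monomial $\gamma$ in $a_L$, this gives
$$\Psi_{AV}=1+\frac{\gamma}{q^{1/2}-q^{-1/2}}\,W_\square+O(\gamma^{2}).$$

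Next I would write $\bA_{AV}=c_0\bigcirc+c_1 P_{1,0}+c_2 P_{0,1}$ with each $c_i$ a signed monomial in $a_L$, and expand $\bA_{AV}\Psi_{AV}$ using \eqref{eigenvalue} and \eqref{branching}. Since $c_\emptyset(q)=0$ and $c_\square(q)=1$, the relevant actions are $P_{1,0}W_\emptyset=\bigcirc W_\emptyset$, $P_{0,1}W_\emptyset=W_\square$, $P_{1,0}W_\square=(\bigcirc + a_L z)W_\square$, and $P_{0,1}W_\square=W_{(2)}+W_{(1,1)}$. Matching the coefficient of $\bigcirc\cdot W_\emptyset$ in $\bA_{AV}\Psi_{AV}$ to zero forces $c_0+c_1=0$; matching the coefficient of $W_\square$ at order $\gamma$ then yields $c_2+\gamma a_L c_1=0$. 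With the normalization $c_0=1$, this gives $c_1=-1$ and $c_2=a_L\gamma$, i.e.\ $\bA_{AV}=\bigcirc-P_{1,0}+a_L\gamma P_{0,1}$, as claimed.

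The hardest part is not the algebra, which is a short linear calculation, but the careful bookkeeping of signs, framings, and $4$-chain intersections -- precisely what the argument sidesteps by packaging everything into $\gamma$ and allowing the overall normalization to float. The constraint $\bA_{AV}\Psi_{AV}=0$ determines $\bA_{AV}$ only up to an overall signed monomial in $a_L$, which explains the $\pm(a_L)^{?}$ prefactor in the statement. The one geometric input that must be trusted is the identification of the boundary class of the basic Aganagic--Vafa disk with the core of the solid torus, which is transparent from the explicit toric parameterization of $L$ recalled before Proposition \ref{disks AV}.
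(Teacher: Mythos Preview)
Your argument is correct and follows essentially the same route as the paper: identify the first nontrivial term of $\Psi_{AV}$ as the basic disk giving $\frac{\gamma}{z}W_\square$, write $\bA_{AV}$ as an unknown linear combination of $\bigcirc$, $P_{1,0}$, $P_{0,1}$ with signed-monomial coefficients, and then read off those coefficients from the vanishing of the $W_\emptyset$ and $W_\square$ components of $\bA_{AV}\Psi_{AV}$ using \eqref{eigenvalue} and \eqref{branching}. One small wording quibble: the phrase ``at order $\gamma$'' for the $W_\square$ coefficient is slightly off, since the $c_2 P_{0,1}\cdot 1$ contribution carries no $\gamma$; what you really mean (and what your displayed equation $c_2+\gamma a_L c_1=0$ correctly expresses) is simply that the full $W_\square$-coefficient vanishes, the $\bigcirc$-proportional pieces having already cancelled via $c_0+c_1=0$.
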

\begin{proof} 
Let us first explain the formula for $\Psi_{AV}$.  
The zeroeth term in the count of disconnected curves is 1 by definition.  The first
term counts the embedded disk sitting above the corresponding `leg' of the toric diagram. 
The boundary of this disk is the longitude of the Lagrangian solid torus, hence gives 
$W_\square$.  
By definition disks are counted by $(q^{1/2}-q^{-1/2})^{-1}$.  
We absorb framing, 4-chain, and sign conventions in $\gamma$, which is a signed monomial in $a_L$. 

For $\bA_{AV}$, we know what disks must contribute; hence: 
$$\bA_{AV} = \pm a_L^{n_{0,0}} \bigcirc \pm a_L^{n_{1,0}} P_{1,0} \pm a_L^{n_{0,1}} P_{0,1}$$
We rescale so the coefficient of the $\bigcirc$ term is $1$. 
$$\bA_{AV} \sim \bigcirc + \gamma_{1,0} P_{1,0} + \gamma_{0,1} P_{0,1}$$
Here the  mystery coefficients $\gamma_{i,j}$ are signed monomials in $a_L$. 
We solve for them using (\ref{eigenvalue}) and (\ref{branching}).  
Consider
the zeroeth order term in $\bA_{AV} \Psi_{AV}$, which $P_{0,1}$ cannot affect.  This is 
$ (1 + \gamma_{1,0}) \bigcirc$, so we find $\gamma_{1,0} = -1$. 
 Now we have: 
$$(\bigcirc - P_{1,0} + \gamma_{0,1} P_{0,1})( 1 + \gamma (q^{1/2}-q^{-1/2})^{-1} W_\square + \cdots)  = 0$$
After expanding the product, the coefficient of $W_\square$ is 
$\gamma_{0,1} - a_L \gamma$.  
We learn $\gamma_{0,1} = a_L \gamma$. 
\end{proof} 

Let us treat similarly the second case.  We write $a$ for the framing variable in $S^3$. 

\begin{proposition} \label{unknot coefficients}
We have 
$$\Psi_{\bigcirc} = 1  + \gamma \frac{a-a^{-1}}{q^{1/2} - q^{-1/2}} W_\square + \cdots$$
where $\gamma$ is some signed power of $a_L$ depending on framing choices.  
For the same choices, one of the following holds:  
$$\pm a_L^? \bA_{\bigcirc} = \bigcirc - P_{1,0} + \gamma(a_L a P_{0,1} - a^{-1} P_{1,1})$$
$$\pm a_L^? \bA_{\bigcirc}  = \bigcirc - P_{1,0} + \gamma(- a_L a^{-1} P_{0,1} + a P_{1,1})$$
Note these possibilities differ by $a \mapsto - a^{-1}$ (which however will also change the 
value of $\gamma$). 
\end{proposition}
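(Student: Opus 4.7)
My plan is to adapt the strategy of Proposition \ref{av coefficients} to the four-disk setting, using the leading term of $\Psi_\bigcirc$ as input to the equation $\bA_\bigcirc \Psi_\bigcirc = 0$. The order-$\gamma$ contribution to $\Psi_\bigcirc$ comes from the two basic embedded holomorphic disks in $(T^*S^3, L_\bigcirc)$ corresponding to the two halves of the disk in $S^3$ bounded by the unknot; their opposite $4$-chain intersections with the filling yield framing factors $a$ and $-a^{-1}$, each disk is counted with the skein disk-weight $(q^{1/2}-q^{-1/2})^{-1}$, and both have boundary the longitude $W_\square$. Absorbing signs and $a_L$-framing choices into a common monomial $\gamma$, this gives
$$\Psi_\bigcirc = 1 + \gamma \, \frac{a - a^{-1}}{q^{1/2} - q^{-1/2}} \, W_\square + O(\gamma^2).$$

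By Proposition \ref{disks unknot}, after rescaling so that $\bigcirc$ has coefficient $1$,
$$\bA_\bigcirc \sim \bigcirc + \gamma_{1,0} P_{1,0} + \gamma_{0,1} P_{0,1} + \gamma_{1,1} P_{1,1},$$
with three unknown signed monomials in $(a, a_L)$. Just as in the AV case, the $W_\emptyset$-coefficient of $\bA_\bigcirc \Psi_\bigcirc = 0$ at leading order forces $\gamma_{1,0} = -1$. To extract the $W_\square$-coefficient equation I need the identity $P_{1,1} \cdot 1 = a_L \, W_\square$, which follows from the fact that the $(1,1)$-curve on $\partial \bT$ is isotopic to the core longitude through the interior, producing a single framing twist (hence the $a_L$-factor). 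Using this identity together with (\ref{eigenvalue}) (in which $c_\square(q)=1$), (\ref{branching}), and the leading coefficient of $\Psi_\bigcirc$ computed above, the $W_\square$-coefficient equation reduces to
$$\gamma_{0,1} + a_L \, \gamma_{1,1} = a_L \gamma (a - a^{-1}).$$

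The main obstacle is that this single linear equation in two unknowns leaves a one-parameter family of solutions; to identify the geometrically realized ones, I invoke the constraint that each $\gamma_{ij}$ is a signed monomial in $(a, a_L)$, since each corresponds to a single disk of unit multiplicity with a specific $4$-chain intersection. Matching the two monomial terms $\gamma a_L a$ and $-\gamma a_L a^{-1}$ on the right-hand side with $\gamma_{0,1}$ and $a_L \gamma_{1,1}$ in the two possible orders yields exactly the two monomial solutions stated in the proposition. These are interchanged by the involution $a \mapsto -a^{-1}$, which geometrically corresponds to reversing the orientation of $S^3$: this flips the sign of every $4$-chain intersection and swaps the roles of the two hemispheres contributing to $\Psi_\bigcirc$.
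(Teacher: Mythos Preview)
Your algebraic derivation of the $\gamma_{ij}$ is essentially identical to the paper's: normalize so that $\bigcirc$ has coefficient $1$, read off $\gamma_{1,0}=-1$ from the $W_\emptyset$ term, extract $\gamma_{0,1}+a_L\gamma_{1,1}=\gamma a_L(a-a^{-1})$ from the $W_\square$ term, and then use the monomial constraint to isolate the two solutions. Your identity $P_{1,1}\cdot 1 = a_L W_\square$ is correct and is exactly what is implicit in the paper's computation.

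The one substantive divergence is your geometric account of the leading term of $\Psi_\bigcirc$. The paper does \emph{not} attribute it to two holomorphic disks with boundary on $L_\bigcirc$ alone. Recall that here $\Psi_\bigcirc \in \sk^+(L)\otimes\sk(S^3)$: curves are allowed to end on both Lagrangians, and the relevant curve is a single holomorphic \emph{cylinder}, with one boundary component on $L_\bigcirc$ (contributing $W_\square$) and the other on the zero section $S^3$, where it is the unknot. The factor $\frac{a-a^{-1}}{q^{1/2}-q^{-1/2}}$ is then simply the value of $\bigcirc$ in $\sk(S^3)$, not a sum of separate $4$-chain contributions $a$ and $-a^{-1}$ from two disks. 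Your two-hemisphere picture does not match the actual geometry of $(T^*S^3,\, L_\bigcirc\cup S^3)$. Since the downstream algebra only uses the resulting formula for $\Psi_\bigcirc$, this does not invalidate the rest of your argument, but the justification of that first term should be corrected.
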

\begin{proof} 
Let us explain the formula for $\Psi_{\bigcirc}$.  
Here the first term arises because as we discuss in \cite{SOB} there is a unique cylinder, 
whose boundary is the meridian in the conormal and the unknot in $S^3$.  Evaluating
the unknot in the skein of $S^3$ gives the term $\frac{a-a^{-1}}{q^{1/2} - q^{-1/2}}$, 
and $\gamma$ contains any extra 4-chain intersections, framing of the knots, etc. 

We have seen what disks contribute, so up to scalar multiple
$$\bA_{\bigcirc} \sim \bigcirc + \gamma_{1,0} P_{1,0} + \gamma_{0,1} P_{0,1} 
+ \gamma_{1,1} P_{1,1}$$
Here the $\gamma_{i,j}$ are signed monomials in the framing variables. 
We solve for the $\gamma_{i,j}$ in terms of $\gamma$.  Exactly as for the 
previous case, we learn from the degree zero term that $\gamma_{1,0} = -1$.  
Then from the $W_\square$ term we learn 
$$\gamma a_L (a-a^{-1})  =  \gamma_{0,1} + a_L \gamma_{1,1} $$
Recall the $\gamma$ are all monomials.  Thus either: 
(1) $\gamma_{0,1} = \gamma a_L a$ and $\gamma_{1,1} = - \gamma a^{-1}$, 
or (2) $\gamma_{0,1} = - \gamma a_L a^{-1}$ and $\gamma_{1,1} = \gamma a$. 
This yields the two possibilities stated in the proposition. 
\end{proof}

\section{Reminders of partition combinatorics} \label{combinatorics}

By a partition $\lambda$ we mean a finite nonincreasing sequence of integers 
$\lambda_1 \ge \lambda_2 \ge \cdots$.  We write $|\lambda| := \sum \lambda_i$, 
and $\ell(\lambda)$ for the `number of parts' i.e. the number of nonzero $\lambda_i$.  
We discuss partitions in terms of their Young diagrams; see Figure \ref{young}.  
We discuss a square in the diagram by writing ``$\square \in \lambda$''.  Each square
has an {\em arm}, {\em leg}, {\em coarm}, and {\em coleg} as depicted in Figure \ref{young}.

\begin{figure}[b]
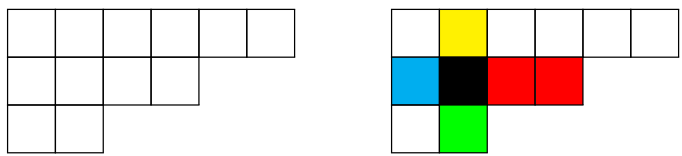

 \ytableausetup{nosmalltableaux}
  \begin{ytableau}
   ~ &  & & & & \\
   ~ &  & & \\
   ~ & 
  \end{ytableau}
  \hspace{1cm} 
    \begin{ytableau}
   ~ & *(yellow)  & & & & \\
   *(cyan) & *(black) & *(red) & *(red) \\
   ~ & *(green)
  \end{ytableau}

  \caption{\label{young} Left: the Young diagram of the partition $6 + 4 + 2$.   Right: the arm, leg, coarm, and coleg of the 
  black square are indicated in red, green, blue, and yellow, respectively. }
\end{figure}

\begin{figure}[b]
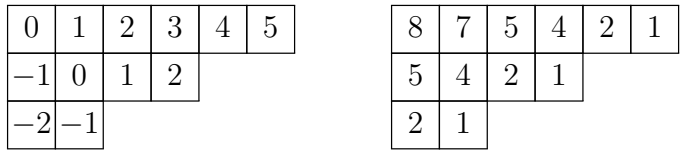

 \ytableausetup{nosmalltableaux}
  \begin{ytableau}
   0 & 1 & 2 & 3 & 4 & 5 \\
   -1 & 0 & 1 & 2 \\
   -2 & -1 
  \end{ytableau}
  \hspace{1cm} 
    \begin{ytableau}
   8 & 7  & 5  & 4 & 2 & 1 \\
   5 & 4 & 2 & 1 \\
   2 & 1
  \end{ytableau}
  \caption{\label{content hook figure} Left: in each square we write the valued of $c(\square)$.   
  The content polynomial is $q^{-2} + 2 q^{-1} + 2 + 2q + 2 q^2 + q^3 + q^4 + q^5$.   
  Right: in each square we write
  the value of $h(\square)$. }
\end{figure}

The {\em hook} of a square is the union of the square itself, and its arm and leg.  The {\em hooklength} $h(\square)$
is the total number of boxes in the hook, i.e. the arm plus the leg plus one.  The {\em content} $c(\square)$
is the coarm minus the coleg.  See Figure \ref{content hook figure}.  

As practice with these notions, and because we need it later, let us prove: 

\begin{lemma}\label{even}
$\sum_{\square \in \lambda} c(\square) + h(\square) + 1 \equiv 0 \pmod 2$
\end{lemma}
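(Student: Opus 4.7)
My plan is to work out $c(\square) + h(\square) + 1 \bmod 2$ one square at a time, using the geometric relation between hook data and row/column lengths, and then perform the sum.

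Fix a square at position $(i,j) \in \lambda$. Directly from the definitions one has
\[
  c(\square) = (j-1) - (i-1), \qquad h(\square) = (\lambda_i - j) + (\lambda'_j - i) + 1,
\]
where $\lambda'$ denotes the conjugate partition. Adding these and reducing modulo $2$ the terms $-2i+2$ drop out, leaving
\[
  c(\square) + h(\square) + 1 \equiv \lambda_i + \lambda'_j \pmod 2.
\]
This is the key identity: parity-wise, $c(\square)+h(\square)+1$ depends only on the length of the row and the length of the column through the square. Conceptually, this is because $\text{arm}+\text{coarm}+1$ equals the row length and $\text{leg}+\text{coleg}+1$ equals the column length, so the even parts $+2$ and the constant $1$'s combine to leave exactly $\lambda_i+\lambda'_j$ mod $2$.

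Now I would sum this congruence over all $\square \in \lambda$. Each row $i$ contributes its length $\lambda_i$ to the sum exactly $\lambda_i$ times, so $\sum_{\square} \lambda_i = \sum_i \lambda_i^2$, and similarly $\sum_{\square} \lambda'_j = \sum_j (\lambda'_j)^2$. Using $n^2 \equiv n \pmod 2$, this gives
\[
  \sum_{\square \in \lambda} \bigl(c(\square)+h(\square)+1\bigr) \equiv \sum_i \lambda_i + \sum_j \lambda'_j = |\lambda| + |\lambda'| = 2|\lambda| \equiv 0 \pmod 2,
\]
which is the claim.

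There is no real obstacle here: the only point that might look mysterious is the pointwise identity $c+h+1 \equiv \lambda_i + \lambda'_j \pmod 2$, but it falls out immediately once one writes the four hook quantities in terms of the coordinates $(i,j)$ and the row/column lengths. Everything else is bookkeeping, with the final step using only $n \equiv n^2 \pmod 2$ and the self-duality $|\lambda|=|\lambda'|$.
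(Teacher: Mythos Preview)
Your proof is correct and is essentially the same argument as the paper's: both reduce the per-box quantity modulo $2$ to $\lambda_i + \lambda'_j$ (the paper phrases this via $\mathrm{arm}+\mathrm{coarm}+\mathrm{leg}+\mathrm{coleg}+2$ rather than coordinates), then sum to get $\sum_i \lambda_i^2 + \sum_j (\lambda'_j)^2 \equiv 2|\lambda| \pmod 2$. The only difference is notation---you use indices $(i,j)$ and $\lambda'$ where the paper uses the geometric arm/leg/coarm/coleg language.
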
 
\begin{proof} The quantity in question is  
$\sum_{\square \in \lambda} \mathrm{coarm}(\square) - \mathrm{coleg}(\square) + \mathrm{arm}(\square) + \mathrm{leg}(\square) + 2$, which mod two agrees with
$\sum_{\square \in \lambda} \mathrm{coarm}(\square) + \mathrm{coleg}(\square) +  \mathrm{arm}(\square) + \mathrm{leg}(\square) + 2 
$. 
Each term is the length of the row containing the box plus the height of the column containing the box,
hence we are summing the squares of the row lengths and column heights.  This has the same parity 
as the sum of the row lengths and the column heights, which is $2 |\lambda|$, hence zero mod 2. 
\end{proof} 

We need some $q$-numbers.  We write $[n]_q := 1 + q + \cdots + q^{n-1}$.  
The {\em content polynomial} is 
$$c_\lambda(q) := \sum_{\square \in \lambda} q^{c(\square)}$$
Note $c_\lambda(1) = |\lambda|$. 

The {\em $q$-hooklength} is
$$h_\square(q) := \sum_{\blacksquare \in \mathrm{Hook}(\square)} q^{c(\blacksquare)}$$
Note that $h_\square(1) = h(\square)$ and in fact $h_\square(q)$ is some power of $q$ times
$[h(\square)]_q$.  The {\em hook polynomial} is 
$$h_\lambda(q) := \prod_{\square \in \lambda} h_\square(q) =  
q^{- \sum (i-1) \lambda_i}  \prod_{\square \in \lambda} [h_\square(1)]_q$$  
It is not difficult to see that $h_\lambda(q) \cdot \prod_{\square \in \lambda} q^{-c(\square)/2}$ 
is symmetric under $q\to q^{-1}$.   

We write $\lambda + \square = \mu$ to indicate that $\mu$ is a partition whose Young diagram
can be obtained by adding one box to that of $\lambda$.  We will need the formula: 

\begin{equation}  \label{hookbranching}
\frac{c_\mu(q)}{h_\mu(q)} = \sum_{\lambda + \square = \mu} \frac{1}{h_\lambda(q)}
\end{equation}

This formula is the specialization of the `weighted hook length branching rule' of \cite{CKP} to $x_i=q$ and $y_j = q^{-1}$. 
(It is asserted there that some $q$-specialization, presumably this one,  can be 
obtained from the branching rule for Hall-Littlewood polynomials on \cite[p. 243]{Mac}.)  
Note at $q=1$ and multiplied by $|\lambda|!$, this is just the usual branching
rule for dimensions of representations of the symmetric group.

\section{Calculation} \label{calculation}

\begin{proposition} \label{solving in C3}
Any solution of
$(\bigcirc -  P_{1,0} + a_L \gamma P_{0,1}) \Psi = 0$ is a scalar multiple of 
$$\Psi = \sum_\lambda \gamma^{|\lambda|} W_\lambda \prod_{\square \in \lambda} \frac{  q^{-c(\square)/2}}{q^{h(\square)/2} - q^{-h(\square)/2}}$$ 
\end{proposition}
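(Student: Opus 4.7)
Expand $\Psi = \sum_\lambda d_\lambda W_\lambda$ in the eigenbasis of $P_{1,0}$. By (\ref{eigenvalue}), $(\bigcirc - P_{1,0})W_\lambda = -a_L(q^{1/2}-q^{-1/2})c_\lambda(q) W_\lambda$, and by (\ref{branching}), $P_{0,1}$ sends $W_\lambda$ to $\sum_{\mu = \lambda+\square} W_\mu$. Extracting the coefficient of $W_\mu$ in the equation $(\bigcirc - P_{1,0} + a_L\gamma P_{0,1})\Psi = 0$ gives, for each nonempty partition $\mu$,
\begin{equation*}
(q^{1/2}-q^{-1/2})\,c_\mu(q)\,d_\mu \;=\; \gamma\sum_{\lambda+\square = \mu} d_\lambda.
\end{equation*}
For $\mu = \emptyset$ both sides vanish, so $d_\emptyset$ is a free parameter, while for $\mu\neq\emptyset$ the polynomial $c_\mu(q)$ is nonzero and $d_\mu$ is determined uniquely from the $d_\lambda$ with $|\lambda|<|\mu|$. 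Hence the solution space is one-dimensional, which already proves the uniqueness-up-to-scalar statement.

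For existence I would insert the proposed product formula for $d_\mu$ into the recursion and verify it. The powers of $\gamma$ cancel since $|\mu| = |\lambda|+1$; dividing the recursion by the common factor $\prod_{\square \in \mu} q^{-c(\square)/2}/(q^{h(\square)/2}-q^{-h(\square)/2})$ and using $\sum_{\square\in\mu}c(\square) - \sum_{\square\in\lambda}c(\square) = c(\square_{\mathrm{new}})$ reduces the claim to the combinatorial identity
\begin{equation*}
(q^{1/2}-q^{-1/2})\,c_\mu(q) \;=\; \sum_{\lambda+\square = \mu} q^{c(\square_{\mathrm{new}})/2}\,\frac{\prod_{\square\in\mu}(q^{h(\square)/2}-q^{-h(\square)/2})}{\prod_{\square\in\lambda}(q^{h(\square)/2}-q^{-h(\square)/2})}.
\end{equation*}
This is a $q\leftrightarrow q^{-1}$ symmetrized form of the weighted hook-length branching rule (\ref{hookbranching}). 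To derive it I would pass to the symmetric factorization $[h]_q = q^{(h-1)/2}(q^{h/2}-q^{-h/2})/(q^{1/2}-q^{-1/2})$ and use the explicit expression $h_\square(q) = q^{c(\square)-\mathrm{leg}(\square)}[h(\square)]_q$ to rewrite the ratio $h_\mu(q)/h_\lambda(q)$ appearing in (\ref{hookbranching}) in terms of the factors $q^{h/2}-q^{-h/2}$.

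The main obstacle is the $q$-exponent bookkeeping. When $\mu = \lambda + \square_{\mathrm{new}}$ with $\square_{\mathrm{new}}$ at position $(r,c)$, the only boxes whose arms or legs change are those strictly left of $\square_{\mathrm{new}}$ in row $r$ (arm $+1$) and those strictly above it in column $c$ (leg $+1$). Summing these shifts with the new box's own contribution and the $q$-powers coming from the symmetrization of $[h]_q$ produces a single monomial in $q$, which must collapse to exactly $q^{c(\square_{\mathrm{new}})/2}$. The verification is elementary but a touch tedious; an alternative would be to recognize the coefficient as a quantum hook-content expression and invoke a known character-theoretic identity, but the direct route via (\ref{hookbranching}) is more self-contained given the preparation in Section \ref{combinatorics}.
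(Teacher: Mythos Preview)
Your argument is correct and follows the same route as the paper: derive the coefficient recursion from (\ref{eigenvalue}) and (\ref{branching}), deduce uniqueness from $c_\mu(q)\neq 0$ for $\mu\neq\emptyset$, and verify the stated formula via the hook branching rule (\ref{hookbranching}). The paper streamlines the last step by plugging in the unsymmetrized ansatz $\psi_\lambda = (\mathrm{const})^{|\lambda|}/h_\lambda(q)$, which satisfies the recursion immediately by (\ref{hookbranching}), and only afterward rewriting the result in the $q\leftrightarrow q^{-1}$ symmetric product form of the statement --- thereby sidestepping the $q$-exponent bookkeeping you flag as tedious.
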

\begin{proof}
We write $\Psi = \sum_\lambda \psi_\lambda W_\lambda$, 
and $\Psi_n = \sum_{|\lambda| = n} \psi_\lambda W_\lambda$.  For degree reasons, our 
original equation splits as 
$$a_L \gamma P_{0,1} \Psi_{n} = (P_{1,0} - \bigcirc) \Psi_{n+1}$$ 
Note that $(P_{1,0} - \bigcirc)$ is diagonal in the $W_\lambda$ basis, with eigenvalues
proportional to the content polynomials.  In particular it is invertible, from which uniqueness of the solution
follows: 
$$\Psi_{n+1} = a_L \gamma ( P_{1,0} - \bigcirc)^{-1} P_{0,1} \Psi_n$$
Using (\ref{eigenvalue}) and (\ref{branching}) we see 
$$a_L \gamma (P_{1,0} - \bigcirc)^{-1} P_{0,1} W_\lambda = \frac{\gamma}{ (q^{1/2} - q^{-1/2})} \sum_{\lambda + \square = \mu} \frac{W_\mu}{c_\mu(q)}$$

In other words we must have: 

$$\psi_\mu = c_\mu(q) \cdot \frac{\gamma}{q^{-1/2} - q^{1/2}} \sum_{\lambda + \square = \mu} \psi_\lambda $$ 

The fact that this holds for 
$$\psi_\lambda = \frac{1}{h_\lambda(q)} \cdot  \left(\frac{\gamma}{q^{-1/2} - q^{1/2}}\right)^{|\lambda|}$$
follows after trivial manipulations from the hook branching rule (\ref{hookbranching}).  Combining terms yields the stated formula. 
This completes the proof.
\end{proof} 

With only slightly more work we can also show 

\begin{proposition} \label{solving for unknot} 
Any solution to the equation 
$(\bigcirc - P_{1,0} + \gamma(a_L a P_{0,1} - a^{-1} P_{1,1})) \Psi = 0$ is a scalar multiple of 
$$\Psi = \sum_\lambda \gamma^{|\lambda|} W_\lambda \prod_{\square \in \lambda} \frac{a q^{-c(\square)/2} - a^{-1} q^{c(\square)/2}}{q^{h(\square)/2} - q^{-h(\square)/2}}$$ 

Meanwhile any solution to the equation $(\bigcirc - P_{1,0} + \gamma(- a_L a^{-1} P_{0,1} + a P_{1,1}))\Psi' = 0$ 
is a scalar multiple of 
$$\Psi' = \sum_\lambda \gamma^{|\lambda|} W_\lambda \prod_{\square \in \lambda} \frac{a q^{c(\square)/2} - a^{-1} q^{-c(\square)/2}}{q^{h(\square)/2} - q^{-h(\square)/2}}$$ 
The products in the two cases are interchanged by taking $a \mapsto a^{-1}$ and $q^{1/2} \mapsto - q^{1/2}$. 
\end{proposition}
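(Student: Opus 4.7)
The plan is to follow the template of Proposition \ref{solving in C3} closely. Writing $\Psi = \sum_\lambda \psi_\lambda W_\lambda$ and grouping by $n = |\lambda|$, the fact that $\bigcirc$ and $P_{1,0}$ preserve degree while $P_{0,1}$ and $P_{1,1}$ both raise degree by one splits the equation into the recursion
$$(P_{1,0}-\bigcirc)\,\Psi_{n+1}\;=\;\gamma\bigl(a_L a\,P_{0,1} - a^{-1}P_{1,1}\bigr)\,\Psi_n.$$
By (\ref{eigenvalue}) the operator $P_{1,0}-\bigcirc$ acts on $W_\mu$ by multiplication by $a_L(q^{1/2}-q^{-1/2})c_\mu(q)$, which is nonzero for $\mu\neq\emptyset$, so $\Psi_{n+1}$ is determined by $\Psi_n$. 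Uniqueness up to an overall scalar follows immediately, and the remaining task is to verify that the displayed product formula solves the recursion.

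For this I would first make explicit the action of $P_{1,1}$ on the basis $W_\lambda$. In the skein algebra of the torus acting on $\sk(\bT)$, $P_{1,1}$ can be computed in terms of $P_{1,0}$ and $P_{0,1}$ using the HOMFLYPT skein relation at the single crossing of a meridian with a longitude; together with (\ref{eigenvalue}) and (\ref{branching}) this yields an explicit Pieri-type formula
$$P_{1,1}\,W_\lambda\;=\;\sum_{\lambda+\square=\mu}\omega(\square)\,W_\mu,$$
where $\omega(\square)$ is an explicit monomial in $q^{\pm c(\square)/2}$ (and a uniform power of $a_L$ that can be absorbed into the framing factor $\gamma$). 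Inserting the candidate $\psi_\lambda$ and simplifying using these formulas alongside (\ref{branching}), the recursion reduces to a scalar identity for each $\mu$, roughly of the shape
$$a_L(q^{1/2}-q^{-1/2})\,c_\mu(q)\,f_\mu\;=\;\sum_{\lambda+\square=\mu}\bigl(a_L a - a^{-1}\omega(\square)\bigr)\,f_\lambda,$$
where $f_\lambda:=\prod_{\square\in\lambda}(aq^{-c(\square)/2}-a^{-1}q^{c(\square)/2})/(q^{h(\square)/2}-q^{-h(\square)/2})$.

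The main obstacle will be verifying this weighted hook-length identity. In the limit $a\to\infty$, after rescaling $\gamma$, it degenerates to the $\C^{3}$ identity (\ref{hookbranching}); the full $(a,q)$-refinement is the specialization $x_i=aq^{-1/2}$, $y_j=a^{-1}q^{1/2}$ of the two-variable weighted hook-length branching rule of \cite{CKP}. Granted that identity, the verification of the recursion is routine, and combining with uniqueness completes the proof of the first equation.

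Finally, the second equation and its solution $\Psi'$ follow from the first by the involution $a\mapsto -a^{-1}$, $q^{1/2}\mapsto -q^{1/2}$, which maps each numerator factor $aq^{-c/2}-a^{-1}q^{c/2}$ to the required $aq^{c/2}-a^{-1}q^{-c/2}$ and negates each hook factor $q^{h/2}-q^{-h/2}$. Lemma \ref{even} controls the resulting overall $(-1)^{\sum(c+h+1)}$ sign, which works out to be a global sign absorbable in $\gamma$; hence the operator identity and the solution formula transform consistently.
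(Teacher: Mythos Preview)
Your plan is essentially the paper's, but you miss the simplification that makes it work without any new combinatorial input. Once you use the skein relation $(q^{1/2}-q^{-1/2})P_{1,1}=[P_{1,0},P_{0,1}]$ together with (\ref{eigenvalue}) and (\ref{branching}), the Pieri-type formula you seek is simply $P_{1,1}W_\lambda=a_L\sum_{\lambda+\square=\mu}q^{c(\square)}W_\mu$, so the recursion reads
\[
(q^{1/2}-q^{-1/2})\,c_\mu(q)\,\psi_\mu \;=\; \gamma\sum_{\lambda+\square=\mu}\bigl(a-a^{-1}q^{c(\square)}\bigr)\psi_\lambda.
\]
Now observe that the candidate coefficient factors as $\psi_\lambda=\gamma^{|\lambda|}\Bigl(\prod_{\square\in\lambda}(a-a^{-1}q^{c(\square)})\Bigr)\cdot\phi_\lambda$, where $\phi_\lambda$ is exactly the $\C^3$ coefficient from Proposition \ref{solving in C3}. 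Since the extra factor $(a-a^{-1}q^{c(\square)})$ in the sum is precisely $P_\mu/P_\lambda$, it cancels, and the identity collapses to the very same single-variable rule (\ref{hookbranching}) already used. No separate two-parameter specialization of \cite{CKP} is needed; your proposed appeal to one is an unnecessary detour (and, as stated, an unverified one).

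For the second equation the paper repeats the same direct computation rather than transporting the first via the involution; the substitution $a\mapsto a^{-1}$, $q^{1/2}\mapsto -q^{1/2}$ (note: not $a\mapsto -a^{-1}$) is invoked only to check the final assertion that the two \emph{products} are interchanged. There Lemma \ref{even} shows the overall sign is exactly $+1$, not merely a global sign to be absorbed into $\gamma$.
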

\begin{proof}
This time we have 
$$\Psi_{n+1} = \gamma (P_{1,0} - \bigcirc)^{-1}(a_L a P_{0,1} - a^{-1} P_{1,1}) \Psi_n$$
We abbreviate $\Omega :=  (P_{1,0} - \bigcirc)^{-1}(a_L a P_{0,1} - a^{-1} P_{1,1}) $. 
From the skein relation we have: 
$$(q^{1/2} - q^{-1/2}) P_{1,1} =  [P_{1,0}, P_{0,1}]$$
and noting moreover that $ [P_{1,0}, P_{0,1}] =  [P_{1,0} - \bigcirc, P_{0,1}]$, we compute 
$$\Omega  = 
  a_L a (P_{1,0} - \bigcirc)^{-1} P_{0,1} - 
  \frac{a^{-1}}{q^{1/2} - q^{-1/2}} \bigg(P_{0,1} - (P_{1,0} - \bigcirc)^{-1} P_{0,1}  (P_{1,0} - \bigcirc )\bigg)
 $$
Again using (\ref{eigenvalue}) and (\ref{branching}), we have:
\begin{eqnarray*} \Omega W_\lambda & = &
\frac{1}{q^{1/2} - q^{-1/2}} \sum_{\lambda + \square = \mu} W_\mu \cdot \left(\frac{a}{c_\mu(q)}   - 
a^{-1} \left(1- \frac{c_\lambda(q)}{c_\mu(q)}\right) \right) \\
& = & 
\frac{1}{q^{1/2} - q^{-1/2}} \sum_{\lambda + \square = \mu}W_\mu \cdot  \frac{a - a^{-1} q^{c(\square)}} {c_\mu(q)}  
\end{eqnarray*}
As before the result follows from the hook branching rule  (\ref{hookbranching}).  This gives the first
formula.  The proof of the second is similar. 

That the formulas are interchanged doing $a \mapsto a^{-1}$ and $q^{1/2} \mapsto -q^{1/2}$ 
is apparent up to a sign given by the parity of $\sum_{\square \in \lambda} c(\square) + h(\square) + 1$; 
we computed this in Lemma \ref{even}. 
\end{proof} 

\begin{remark}
Let us write $\langle W_\lambda(\bigcirc) \rangle \in \sk(S^3)$ for the element in the skein 
corresponding to the $W_\lambda$ cable of a standard unknot.   It is well known that 
under the identification of $\sk(S^3)$ with an appropriately localized polynomial ring in $a,q^{1/2}$, we have
$$\langle W_\lambda(\bigcirc) \rangle =  \prod_{\square \in \lambda} \frac{aq^{c(\square)/2} -a^{-1}  q^{-c(\square)/2}}{q^{h(\square)/2} - q^{-h(\square)/2}}.$$
Thus our formula may also be written
$$\Psi' = \sum_\lambda \gamma^{|\lambda|} W_\lambda \cdot \langle W_\lambda(\bigcirc) \rangle.$$ 
\end{remark}

\bibliographystyle{hplain}
\bibliography{skeinrefs}

\begin{thebibliography}{10}

\bibitem{AENV}
Mina Aganagic, Tobias Ekholm, Lenhard Ng, Cumrun Vafa, et~al.
\newblock Topological strings, {D}-model, and knot contact homology.
\newblock {\em Advances in Theoretical and Mathematical Physics},
  18(4):827--956, 2014.

\bibitem{AKMV}
Mina Aganagic, Albrecht Klemm, Marcos Marino, and Cumrun Vafa.
\newblock The topological vertex.
\newblock {\em Communications in mathematical physics}, 254(2):425--478, 2005.

\bibitem{Aganagic-Vafa}
Mina Aganagic and Cumrun Vafa.
\newblock Mirror symmetry, {D}-branes and counting holomorphic discs.
\newblock {\em arXiv preprint hep-th/0012041}, 2000.

\bibitem{BEHWZ}
Frederic Bourgeois, Yakov Eliashberg, Helmut Hofer, Kris Wysocki, and Eduard
  Zehnder.
\newblock Compactness results in symplectic field theory.
\newblock {\em Geom. Topol.}, 7:799--888, 2003.

\bibitem{CKP}
Ionu{\c{t}} Ciocan-Fontanine, Matja{\v{z}} Konvalinka, and Igor Pak.
\newblock The weighted hook length formula.
\newblock {\em Journal of Combinatorial Theory, Series A}, 118(6):1703--1717,
  2011.

\bibitem{DSV}
Duiliu-Emanuel Diaconescu, Vivek Shende, and Cumrun Vafa.
\newblock Large {N} duality, {L}agrangian cycles, and algebraic knots.
\newblock {\em Communications in Mathematical Physics}, 319(3):813--863, 2013.

\bibitem{GDR1}
Georgios Dimitroglou~Rizell.
\newblock Knotted {L}egendrian surfaces with few {R}eeb chords.
\newblock {\em Algebr. Geom. Topol.}, 11(5):2903--2936, 2011.

\bibitem{Eicm}
Tobias Ekholm.
\newblock Knot contact homology and open {G}romov-{W}itten theory.
\newblock In {\em Proceedings of the {I}nternational {C}ongress of
  {M}athematicians---{R}io de {J}aneiro 2018. {V}ol. {II}. {I}nvited lectures},
  pages 1063--1086. World Sci. Publ., Hackensack, NJ, 2018.

\bibitem{EENS}
Tobias Ekholm, John Etnyre, Lenhard Ng, and Michael Sullivan.
\newblock Knot contact homology.
\newblock {\em Geom. Topol.}, 17(2):975--1112, 2013.

\bibitem{EESPxR}
Tobias Ekholm, John Etnyre, and Michael Sullivan.
\newblock Legendrian contact homology in {$P\times\Bbb R$}.
\newblock {\em Trans. Amer. Math. Soc.}, 359(7):3301--3335, 2007.

\bibitem{ENg}
Tobias Ekholm and Lenhard Ng.
\newblock Higher genus knot contact homology and recursion for colored
  {HOMFLY-PT} polynomials.
\newblock {\em arXiv preprint arXiv:1803.04011}, 2018.

\bibitem{bare}
Tobias Ekholm and Vivek Shende.
\newblock Counting bare curves.
\newblock {\em in preparation}.

\bibitem{SOB}
Tobias Ekholm and Vivek Shende.
\newblock Skeins on branes.
\newblock {\em arXiv preprint arXiv:1901.08027}.

\bibitem{Faber-Pandharipande}
Carel Faber and Rahul Pandharipande.
\newblock {Hodge} integrals and {Gromov}-{Witten} theory.
\newblock {\em Inventiones mathematicae}, 139(1):173--199, 2000.

\bibitem{GV-geometry}
Rajesh Gopakumar and Cumrun Vafa.
\newblock On the gauge theory/geometry correspondence.
\newblock {\em arXiv preprint hep-th/9811131}, 1998.

\bibitem{Graber-Zaslow}
Tom Graber and Eric Zaslow.
\newblock Open-string {Gromov}-{Witten} invariants: Calculations and a mirror
  theorem.
\newblock {\em arXiv preprint hep-th/0109075}.

\bibitem{Katz-Liu}
Sheldon Katz and Chiu-Chu~Melissa Liu.
\newblock Enumerative geometry of stable maps with lagrangian boundary
  conditions and multiple covers of the disc.
\newblock {\em Geometry \& Topology Monographs}, 8:1--47, 2006.

\bibitem{Liu}
Chiu-Chu~Melissa Liu.
\newblock Moduli of {J}-holomorphic curves with {Lagrangian} boundary condition
  and open {Gromov}-{Witten} invariants for an {$S^1$}-pair.
\newblock {\em arXiv preprint math/0210257}.

\bibitem{Lukac}
Sascha Lukac.
\newblock Idempotents of the {H}ecke algebra become {S}chur functions in the
  skein of the annulus.
\newblock {\em Math. Proc. Cambridge Philos. Soc.}, 138(1):79--96, 2005.

\bibitem{LM}
Sascha Lukac and Hugh Morton.
\newblock The {HOMFLY} polynomial of the decorated {Hopf} link.
\newblock {\em J. Knot Theory Ramifications}, 12(03):395--416, 2003.

\bibitem{Mac}
Ian Macdonald.
\newblock {\em Symmetric functions and Hall polynomials}.
\newblock Oxford university press, 1998.

\bibitem{Morton-Samuelson}
Hugh Morton and Peter Samuelson.
\newblock The {HOMFLYPT} skein algebra of the torus and the elliptic {H}all
  algebra.
\newblock {\em Duke Math. J.}, 166(5):801--854, 2017.

\bibitem{OV}
Hirosi Ooguri and Cumrun Vafa.
\newblock {Knot invariants and topological strings}.
\newblock {\em Nucl. Phys.}, B577:419--438, 2000.

\bibitem{Przytycki}
J\'{o}zef Przytycki.
\newblock Skein modules of {$3$}-manifolds.
\newblock {\em Bull. Polish Acad. Sci. Math.}, 39(1-2):91--100, 1991.

\bibitem{GDR2}
Georgios~Dimitroglou Rizell and Roman Golovko.
\newblock Legendrian submanifolds from {B}ohr-{S}ommerfeld covers of monotone
  {L}agrangian tori.
\newblock {\em arXiv preprint arXiv:1901.08415}.

\bibitem{Turaev}
Vladimir Turaev.
\newblock The {C}onway and {K}auffman modules of a solid torus.
\newblock {\em Zap. Nauchn. Sem. Leningrad. Otdel. Mat. Inst. Steklov. (LOMI)},
  167(Issled. Topol. 6):79--89, 190, 1988.

\bibitem{Witten}
Edward Witten.
\newblock {Chern-Simons gauge theory as a string theory}.
\newblock {\em Prog. Math.}, 133:637--678, 1995.

\end{thebibliography}

\end{document}